\newtheorem{theorem}{Theorem}[section]
\newtheorem{proposition}[theorem]{Proposition}
\newtheorem{coro}[theorem]{Corollary}
\newtheorem{rem}[theorem]{Remark}
\renewcommand{\epsilon}{\varepsilon}
\newcommand\R{\mathbb{R}}
\newcommand{\norm}[1]{{\left\lVert{#1}\right\rVert}}
\newcommand\restr[2]{{
  \left.\kern-\nulldelimiterspace 
  #1 
  \vphantom{\big|} 
  \right|_{#2} 
  }}
\numberwithin{equation}{section}
\begin{document}
\title{A note on Sobolev type  inequalities on graphs with polynomial volume growth}
\author{Li Chen}
\address{Department of Mathematics, University of Connecticut, Storrs, CT 06269}
\email{}

\begin{abstract} 
We prove the generalized $L^p$-Poincar\'e inequalities and Sobolev type inequalities on graphs with polynomial volume growth. They are optimal on Vicsek graphs.
\end{abstract}

\maketitle

\section{Introduction}
The classical Sobolev inequalities play a crucial role in analysis and partial differential equations in Euclidean spaces. In many different settings, Sobolev inequalities have been deeply explored and they are extremely flexible and  useful. For a nice introduction of previous work, we refer to the survey by L. Saloff-Coste \cite{SC09}, see also the comprehensive book \cite{SC02}. 

In this note we work on a scale of Sobolev type inequalities of great generality in the setting of graphs, which was first introduced by T. Coulhon in \cite{C96} on manifolds and graphs. Let $D>1$ and $1\le p\le \infty$, consider on an infinite graph $\Gamma$ the following scale of  inequalities
\begin{equation}\label{eq:SobD}
\Vert f\Vert_{p} \leq |\Omega|^{1/D}\,\Vert  \,|\nabla f|\,\Vert_{p},
\end{equation}
for any finite subset $\Omega$ of $\Gamma$ and any function $f$ supported in $\Omega$, where $|\Omega|$ denotes the measure of $\Omega$. 

The above inequalities are  also quoted as Faber-Krahn type inequality (see for instance \cite[page 88]{C96}). When $p<D$, the inequality \eqref{eq:SobD} is equivalent to the graph analogue of classical Sobolev inequalities. For more  interpretations from either geometric or analytic aspects, we refer to \cite{C96,C95,B17}. Of the same spirit, a more general version of \eqref{eq:SobD} was also studied in the context of manifolds and graphs in \cite{C95,C03, CHS}, that is,
\begin{equation}\label{eq:SobVarphi}
\Vert f\Vert_{p} \leq \varphi(|\Omega|)\,\Vert  \,|\nabla f|\,\Vert_{p},
\end{equation}
 where  $\varphi$ is a non-decreasing function on $\mathbb R^+$ to itself. 

Volume lower bounds are important implications of the above two scales. Indeed, \eqref{eq:SobD}  implies that $|B(x,n)|\ge Cn^D$. For the more general scale, if $\varphi$ is unbounded, then  \eqref{eq:SobVarphi} for $p=\infty$ is equivalent to $|B(x,n)|\ge C\varphi^{-1}(n)$, where $\varphi^{-1}$ is the reverse function of $\varphi$ (see \cite[Proposition 2.2]{C03} or \cite{C95}). 
The reverse implication is well studied under the assumption of pseudo-Poincar\'e inequality
\begin{equation}\label{PP_p}
\|f-f_n\|_p\le Cn\|\,|\nabla f|\,\|_p,
\end{equation}
where $f_n(x)=\frac1{|B(x,n)|}\sum_{y\in B(x,n)}f(y)\mu(y)$, where $\mu$ is the measure on $\Gamma$. In fact, this allows the scale to go down from $\infty$ to $p$, see \cite[Proposition 2.6]{C03}. 
Compared with the Poincar\'e inequality
\begin{equation}\label{P_p}
\|f-f_{B(x,n)}\|_{L^p(B(x,n))} \le Cn\|\,|\nabla f|\,\|_{L^p(B(x,2n))},
\end{equation}
the pseudo-Poincar\'e inequality may hold for more general situations, for instance on all unimodular Lie groups (see \cite{C03}). As is well-known, the  Poincar\'e inequality and the volume doubling property imply the pseudo-Poincar\'e inequality (see for example \cite{SC92}, \cite{SC02}). On graphs both the Poincar\'e and pseudo-Poincar\'e inequalities are very restrictive properties. However, under assumptions of volume growth,  one can obtain analogues of the $L^1$ and $L^2$-Poincar\'e inequalities, see \cite[Section 5]{CSC93}. 

In this note,  our goals are to generalize the results in \cite{CSC93} to the case $1<p<\infty$  on graphs with polynomial volume growth and hence to study the scale of Sobolev type inequalities in form of \eqref{eq:SobVarphi}. We are particularly interested in explicit examples of Vicsek graphs (see \cite{BCG,B04}).

In the following we introduce the setting of graphs, mainly following the notation in \cite{B17}. Let $\Gamma=(V,E) $ be a connected undirected infinite graph with the set of vertices $V$ and the set of edges $E$. Endow $\Gamma$ with a symmetric weight $\mu_{xy}$, $x,y\in V$ such that $\mu_{xy}=\mu_{yx}\geq 0 $. We say that $x$ and $y$ are neighbors, i.e., $\{x,y\}\in E$, denoted by $x\sim y$, if and only if $\mu_{xy}>0$.

Define $\mu (x)=\sum\limits_{y\sim x}\mu_{xy} $, then it extends to a measure on $V$ by 
\[
\mu(\Omega )=\sum_{x\in \Omega }\mu_x,
\]
where $\Omega$ is a finite subset in $V$. 

For $x,y\in V$, a path of length $n$ between $x$ and $y$  is a sequence $x_0,x_1,\cdots,x_n$ with $x_0 = x, x_n = y$ and $x_{i-1}\sim x_i, 1\le i\le n$.  The induced graph distance $d(x,y)$ is the minimal number of edges in any path connecting $x$ and $y$. The graph $\Gamma$ is connected if $d(x,y)<\infty$ for all $x,y\in V$.
Define balls in $\Gamma$ by 
\[
B(x,r)=\{y\in V: d(x,y)\leq r, x\in V, r\ge 0\}.
\] 

Any weighted graph $(\Gamma,\mu) $ admits a random walk on $\Gamma$ defined by the transition probabilities $p(x,y)=\frac{\mu_{xy}}{\mu (x)}$ if $x\sim y$ and otherwise $0$. Then $p$ is a reversible Markov kernel satisfying
\begin{eqnarray*}
\mu(x) p(x,y)=\mu_{xy}=\mu(y) p(y,x),
\quad
\sum_{y}p(x,y)=1.
\end{eqnarray*}

Denote by $\mathcal C(V)=\{f:V\to \R\}$ and by  $\mathcal C_0(V)$ the set of functions in $\mathcal C(V)$ with finite support.
For $p\in[1,\infty)$, the $L^p$ norm of a function $f\in \mathcal C(V)$ is given by
\[
\Vert f\Vert _p=\left(\sum_x |f(x)|^p \mu(x) \right)^{1/p}, 
\]

The linear operator $P$ associated with the kernel $p$ is defined by 
\[
Pf(x)=\sum_y p(x,y)f(y).
\]
We call the operator $I-P$ the (probabilistic) Laplacian on $\Gamma$.

The value of the discrete gradient on $\Gamma $ is defined by
\[
|\nabla f(x)|=\left(\frac{1}{2}\sum_y p(x,y)\,|f(x)-f(y)|^2\right)^{1/2}.
\]
Then we have that $\langle(I-P)f,f\rangle=\| \,|\nabla f|\,\|_{2}^2$, where $\langle\cdot,\cdot\rangle$ denotes the inner product on $L^2(V,\mu)$.

Throughout this paper, we always assume that 
\begin{itemize}
\item  $\Gamma $ is locally finite, i.e., there exists $N\in \mathbb N $ such that any $x\in V $ has at 
most $N$ neighbors;

\item $\mu(x)\ge c_1$ and $(\Gamma,\mu)$ has controlled weights, i.e., there exists $c_2>0$ such that for all $x,y\in V $
\begin{equation}\label{eq:controlled weights}
x\sim y \text{ implies } p(x,y)\geq c_2;
\end{equation}

\item The volume growth of $(\Gamma,\mu)$ is uniformly polynomial. That is, there exists $D>0$ such that $\mu(B(x,n))\simeq n^D$.
\end{itemize}

Note that under these assumptions, we have that $\mu_{xy}>c$ and for $1\leq p<\infty$,
\begin{equation}\label{grad}
\sum_x |\nabla f(x)|^p \mu(x) \simeq \sum_{x,y} |f(x)-f(y)|^p \mu_{xy},
\end{equation}
see, for example, \cite[p. 313]{BR09} for a proof.

Our main results are as follows.
\begin{theorem}\label{thm:pD}
Let $(\Gamma,\mu)$ be a locally uniformly finite weighted graph with controlled weight. Assume that $\mu(B(x,n))\simeq n^D, \forall n\in \mathbb N$. Then for $p \geq 1$, it holds  that 
\begin{equation}\label{pp}
\Vert f-f_n\Vert_{L^p(B(x,n))} \leq C n^{\frac{D}{p}+\frac{1}{p'}}\Vert  \,|\nabla f|\,\Vert_{L^p(B(x,2n))}.\tag{$P_p^{D}$}
\end{equation}
\end{theorem}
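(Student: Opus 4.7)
The plan is a Poincar\'e-via-geodesic argument in the spirit of \cite[Section 5]{CSC93}, extended from $p \in \{1,2\}$ to general $p \in [1,\infty)$ by replacing Cauchy--Schwarz with H\"older's inequality. First I would write $f(y) - f_n(y) = \frac{1}{|B(y,n)|}\sum_{z \in B(y,n)}(f(y)-f(z))\mu(z)$ and apply Jensen's inequality to the convex function $t \mapsto |t|^p$, obtaining
\[
|f(y) - f_n(y)|^p \le \frac{1}{|B(y,n)|}\sum_{z \in B(y,n)} |f(y)-f(z)|^p \mu(z).
\]

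Next, for each pair $(y,z)$ with $z \in B(y,n)$, I would fix once and for all a geodesic $\gamma_{y,z} = (w_0=y, w_1,\ldots,w_d=z)$ of length $d = d(y,z) \le n$. Telescoping and H\"older give
\[
|f(y)-f(z)|^p \le d^{p-1}\sum_{i=0}^{d-1}|f(w_i)-f(w_{i+1})|^p \le n^{p-1}\sum_{(u,v)\in\gamma_{y,z}}|f(u)-f(v)|^p,
\]
reducing everything to a sum over edges. Summing $\mu(y)$ over $y \in B(x,n)$ and then switching the order of summation, I would fix an edge $(u,v)$ and estimate the $\mu\otimes\mu$-weighted number of pairs $(y,z)$ whose chosen geodesic passes through it. Any such pair satisfies $y \in B(u,n)\cap B(x,n)$ and $z \in B(v,n)\cap B(y,n)$, so the polynomial volume growth $|B(\cdot,n)|\simeq n^D$ yields the count $\lesssim n^D \cdot n^D = n^{2D}$. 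Simultaneously $u,v \in B(x,2n)$, since $d(x,u)\le d(x,y)+d(y,u)\le 2n$.

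Combined with the factor $|B(y,n)|^{-1}\simeq n^{-D}$ from Jensen, these estimates yield
\[
\sum_{y \in B(x,n)} |f(y)-f_n(y)|^p \mu(y) \lesssim n^{p-1+D} \sum_{\substack{u\sim v\\ u,v \in B(x,2n)}} |f(u)-f(v)|^p.
\]
The equivalence \eqref{grad} together with $\mu_{uv}\gtrsim 1$ then bounds the right-hand side by $\sum_{u \in B(x,2n)}|\nabla f(u)|^p \mu(u)$, and taking $p$-th roots produces the announced exponent $(p-1+D)/p = 1/p' + D/p$. The most delicate point is the counting step: one has to verify that the geodesic selection never loads any single edge with more than $\lesssim n^{2D}$ pairs (in $\mu\otimes\mu$-measure), and that all loaded edges stay inside $B(x,2n)$. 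Polynomial volume growth enters in both directions here --- from below to control the Jensen average, from above in the counting --- which is why the hypothesis is used so decisively.
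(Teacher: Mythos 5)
Your argument is correct and follows essentially the same route as the paper: fix a geodesic for each pair of points in the ball, telescope, apply H\"older along the path, and use the polynomial volume growth to reorganize and control the resulting edge sum, arriving at $n^{p-1+D}$ and hence the exponent $\frac{D}{p}+\frac{1}{p'}$. The only (harmless) differences are that you subtract the pointwise average $f_n(y)$ and use Jensen --- which matches the literal statement, whereas the paper's proof works with the ball average $f_{B(x,n)}$ via a H\"older double-sum and obtains the $f_n$ version as a corollary --- and that you count the pairs loading each edge explicitly rather than crudely bounding each per-pair edge sum by the full sum over $B(x,2n)$ as the paper does; both yield the same bound.
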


\begin{theorem}\label{thm:Sobolev}
Let $(\Gamma,\mu)$ be a locally uniformly finite weighted graph with controlled weight and let $\Omega\subset \Gamma$ be a bounded subset. Assume that $\mu(B(x,n))\simeq n^D,\forall n\in \mathbb N$. Then for $p \geq 1$, it holds 
\begin{equation}\label{Sobolev}
\Vert f\Vert_{p} \le C \mu(\Omega)^{\frac{1}{p}+\frac{1}{p'D}} \Vert \,|\nabla f|\, \Vert_{p},\quad \forall f \in \mathcal C_0 (\Omega).\tag{$S_p^{D}$}
\end{equation}
\end{theorem}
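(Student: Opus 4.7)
The plan is to deduce \eqref{Sobolev} from the generalized Poincar\'e inequality \eqref{pp} by a cover-and-absorb argument carried out at the unique critical scale $n\simeq \mu(\Omega)^{1/D}$. The key observation is that for $f\in \mathcal{C}_0(\Omega)$ the ball averages $f_n(x)$ are pointwise very small on $\Omega$ once $n^D$ exceeds a large multiple of $\mu(\Omega)$ (essentially because $\Vert f\Vert_1/\mu(B(x,n))$ becomes tiny), while the deviation $f-f_n$ can be controlled on $\Omega$ by summing \eqref{pp} over a finite-overlap covering by balls of radius $n$.

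First I would fix a constant $K\ge 1$, to be chosen large at the end, and pick an integer $n$ with $n^D\simeq K\mu(\Omega)$. Let $\{x_i\}_{i\in I}\subset \Omega$ be a maximal $n$-separated subset. By maximality, $\Omega\subset \bigcup_{i\in I} B(x_i,n)$; by the polynomial volume estimate a standard packing argument yields overlap $\#\{i: y\in B(x_i,2n)\}\le M$ for some $M=M(D,c_1,c_2)$. Using $f\in\mathcal{C}_0(\Omega)$ and splitting $f=(f-f_n)+f_n$, one has
\begin{equation*}
\Vert f\Vert_p^p = \Vert f\Vert_{L^p(\Omega)}^p \leq 2^{p-1}\Bigl(\Vert f-f_n\Vert_{L^p(\Omega)}^p + \Vert f_n\Vert_{L^p(\Omega)}^p\Bigr).
\end{equation*}
Applying Theorem~\ref{thm:pD} to each ball $B(x_i,n)$ and summing with the bounded overlap estimate then gives
\begin{equation*}
\Vert f-f_n\Vert_{L^p(\Omega)}^p \leq \sum_{i\in I}\Vert f-f_n\Vert_{L^p(B(x_i,n))}^p \leq C M\, n^{D+p/p'}\,\Vert \,|\nabla f|\,\Vert_p^p.
\end{equation*}

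For the remaining average term, since $f$ is supported in $\Omega$, H\"older's inequality gives the pointwise bound $|f_n(x)|\leq \mu(B(x,n))^{-1}\Vert f\Vert_1 \leq \mu(B(x,n))^{-1}\mu(\Omega)^{1/p'}\Vert f\Vert_p$; combining this with $\mu(B(x,n))\geq c\,n^D\simeq cK\mu(\Omega)$ yields $|f_n(x)|\leq CK^{-1}\mu(\Omega)^{-1/p}\Vert f\Vert_p$ for every $x$, hence $\Vert f_n\Vert_{L^p(\Omega)}^p \leq C^p K^{-p}\Vert f\Vert_p^p$. Choosing $K$ so large that $2^{p-1}C^pK^{-p}\le 1/2$ absorbs this term into the left-hand side, leaving $\Vert f\Vert_p^p \leq C'n^{D+p/p'}\Vert \,|\nabla f|\,\Vert_p^p$; substituting $n^D\simeq \mu(\Omega)$ then gives the target exponent $\mu(\Omega)^{(D+p/p')/D} = \mu(\Omega)^{1+p/(p'D)}$, so a $p$-th root produces \eqref{Sobolev}. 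The one delicate point is this absorption step: it forces $n$ to be chosen exactly at the scale $\mu(\Omega)^{1/D}$, and relies crucially on the two-sided polynomial volume estimate $\mu(B(x,n))\simeq n^D$ being available simultaneously (the lower bound to make $f_n$ small, the upper bound to guarantee the bounded overlap of the covering).
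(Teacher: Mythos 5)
Your proof is correct and follows essentially the same route as the paper's first proof: decompose $f=(f-f_n)+f_n$, control $f-f_n$ via the Poincar\'e inequality at the critical scale $n^D\simeq\mu(\Omega)$ (your covering argument is in effect the derivation of the pseudo-Poincar\'e inequality \eqref{vicsek-pp} that the paper states as a corollary and leaves to the reader), and absorb the small average term $f_n$. The only noteworthy difference is that you use the plain $L^p$ triangle inequality in place of the pairing $\Vert f\Vert_p^p=(f-f_n,f^{p-1})+(f_n,f^{p-1})$, which lets the same computation cover $p=1$, whereas the paper treats $p=1$ separately by a weak-type estimate and the co-area formula.
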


Note that when $p=1$, the above inequality \eqref{Sobolev} is trivial since it  is equivalent to $\mu(\partial \Omega)\ge C$ following the co-area formula (see for instance \cite[Proposition 2.3]{C03}).

Define the escape time $T(x,r)$ to be the mean exit time of a simple random walk on $\Gamma$ starting at $x$ from the ball with center $x$ and radius $r$. We say $\Gamma$ has escape time exponent $\beta>0$ if $T(x, r) \simeq r^{\beta}$ for $r\ge1$. Well known estimates for random walks on graphs imply that $D\ge 1$ and $2 \le \beta\le D+1$. In particular, Vicsek graphs are borderline examples with $\beta=D+1$ (see \cite{B04}), which are of particular interest to us. In fact, we have
\begin{proposition}\label{opt}
Let $\Gamma$ be a Vicsek graph. Then the Poincar\'e inequality \eqref{pp} and Sobolev type inequality \eqref{Sobolev} in Theorems 1.1 and 1.2 are optimal for $1\leq p\leq \infty$.
\end{proposition}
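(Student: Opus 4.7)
\emph{Proof proposal.}
The plan is to exhibit, at every scale $n$, a single explicit test function $f$ supported in a ball $B(x_0,n)\subset\Gamma$ that saturates simultaneously \eqref{pp} and \eqref{Sobolev} for every $p\in[1,\infty]$, making the ratio $\|f-f_n\|_p/\|\,|\nabla f|\,\|_p$ as large as $n^{D/p+1/p'}$. The shape of $f$ is dictated by the finite ramification of Vicsek graphs: from their recursive construction (see \cite{B04,BCG}) one extracts inside $B(x_0,n)$ a geodesic ``backbone'' $\gamma=(z_0,\ldots,z_m)$ of length $m\simeq n$, whose every interior vertex is a cut-vertex of the ball, and whose middle vertex splits $B(x_0,n)$ into two pieces of $\mu$-volume $\simeq n^D$.

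Given such a $\gamma$, I would set $f(z_i)=i/m$ along the backbone and extend $f$ to $V\setminus\gamma$ by the constant value inherited from the unique $\gamma$-vertex adjacent to each connected component of $V\setminus\gamma$. Two elementary bookkeepings then finish the argument. First, because $f$ is locally constant off $\gamma$ and jumps by $1/m$ along $\gamma$, the gradient is supported on $\gamma$ with $|\nabla f|\lesssim 1/n$, so that $\|\,|\nabla f|\,\|_p^p\lesssim n\cdot n^{-p}=n^{1-p}$ for $1\le p<\infty$ and $\|\,|\nabla f|\,\|_\infty\lesssim 1/n$. Second, symmetry gives $f_n\simeq 1/2$, and the bisection property forces a constant fraction of the mass of $B(x_0,n)$ to sit in branches on which $|f-f_n|\gtrsim 1$, producing $\|f-f_n\|_p^p\gtrsim n^D$ for $p<\infty$ and $\|f-f_n\|_\infty\gtrsim 1$. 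Dividing yields $\|f-f_n\|_p/\|\,|\nabla f|\,\|_p\gtrsim n^{D/p+1/p'}$ for all $p\in[1,\infty]$, so \eqref{pp} is sharp. Replacing the linear profile along $\gamma$ by the tent $f(z_i)=\min(i,m-i)/m$ yields an $f\in\mathcal C_0(B(x_0,n))$ with the same two estimates, and since $\mu(B(x_0,n))^{1/p+1/(p'D)}\simeq n^{D/p+1/p'}$, the sharpness of \eqref{Sobolev} follows identically.

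The main obstacle is the geometric lemma, i.e., actually producing the backbone $\gamma$ with the claimed cut-vertex and bisection properties from the iterated ``cross replacement'' defining a Vicsek graph. Morally this is the content of the equality $\beta=D+1$: the escape time is as large as the general bound allows precisely because a Vicsek graph has a thin one-dimensional skeleton carrying $D$-dimensional bulk, so the exponents predicted by Theorems \ref{thm:pD} and \ref{thm:Sobolev} are really attained. Making this rigorous requires unpacking the level-$k$ construction with $3^k\simeq n$ and checking that a diametral path between two antipodal corners at that level has length $\simeq n$, consists entirely of cut-vertices of the surrounding ball, and has balanced complementary branches of total $\mu$-mass $\simeq n^D$ on each side.
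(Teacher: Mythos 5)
Your test function is, up to reparametrization, the same one the paper uses: the paper takes $\Omega_n=\Gamma\cap[0,3^n]^d$ (the level-$n$ block, of measure $\simeq q^n=3^{nD}$) and the function $F_n$ from \cite{BCG} equal to $1$ at the centre $z_0$, $0$ at the corners $z_i$, linear along each centre-to-corner diagonal and constant on every branch hanging off the diagonals. That is exactly your ``linear along a one-dimensional backbone, locally constant elsewhere'' profile, and the two bookkeeping estimates are identical: the gradient lives on $\simeq 3^n$ edges with increment $3^{-n}$, giving $\Vert\,|\nabla F_n|\,\Vert_p^p\simeq 3^{-n(p-1)}\simeq\mu(\Omega_n)^{-(p-1)/D}$, while $F_n\ge 2/3$ on the central level-$(n-1)$ sub-block, giving $\Vert F_n\Vert_p^p\gtrsim\mu(\Omega_n)$. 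So in substance you have rediscovered the paper's argument.

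The one genuine gap is precisely the point you flag yourself: you never produce the backbone with its cut-vertex and bisection properties, and as written the proof is incomplete. The lesson from the paper is that no abstract ``backbone lemma'' is needed: because the Vicsek graph is a tree built by merging $2^d+1$ copies at corners, the diagonals $\gamma_{z_0,z_i}$ of the explicit block $\Omega_n$, the fact that every off-diagonal component attaches to a single diagonal vertex, and the volume count $\mu(\Omega_{n-1})\simeq q^{-1}\mu(\Omega_n)$ are all immediate from the recursive construction; working with $\Omega_n$ rather than a generic ball $B(x_0,n)$ dissolves the ``main obstacle.'' Two further remarks. First, the paper does not attack \eqref{pp} directly: it proves sharpness of \eqref{Sobolev} and then observes that since \eqref{Sobolev} is derived from \eqref{pp} in the proof of Theorem \ref{thm:Sobolev}, any improvement of the exponent $\frac{D}{p}+\frac{1}{p'}$ in \eqref{pp} would improve \eqref{Sobolev}, a contradiction. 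Your direct route through \eqref{pp} is viable but has a hidden subtlety you gloss over with ``symmetry gives $f_n\simeq 1/2$'': here $f_n(x)$ is the average of $f$ over $B(x,n)$ and so varies with $x$; the clean lower bound $\Vert f-f_n\Vert_p^p\gtrsim n^D$ via $\inf_c\Vert f-c\Vert_p$ only applies verbatim when the subtracted quantity is a single constant such as $f_{B(x,n)}$. The reduction used in the paper avoids having to control $f_n(x)$ pointwise. Second, your ``tent'' modification is unnecessary: the paper's $F_n$ already vanishes on the corners of $\Omega_n$ and hence lies in $\mathcal C_0(\Omega_n)$, so a single function serves for \eqref{Sobolev}.
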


\begin{rem}
In Theorem \ref{thm:pD} and Theorem \ref{thm:Sobolev}, we have variant parameters in inequalities  \eqref{pp} and \eqref{Sobolev}, which depends on $p$ and $D$. When these inequalities are optimal, then the related parameters can't be improved to smaller numbers.
\end{rem}

The paper is organized as follows. In Section 2, we give the proof of Theorem \ref{thm:pD}. Section 3 provides two different proofs of Theorem \ref{thm:Sobolev}. Finally, we study the special case of Vicsek graphs and prove Proposition \ref{opt}.
\bigskip

\section{Poincar\'e  type inequalities}
In this section, we will prove Theorem \ref{thm:pD}. Recall that under assumptions of volume growth, the $L^1$ and $L^2$-Poincar\'e type inequalities were partially obtained in \cite{CSC93}.  In fact, if the the volume growth is uniformly polynomial, i.e.,  $\mu(B(x,n))\simeq n^D$, then for any $f\in \mathcal C_0(V)$,
\[
\norm{f-f_{B(x,n)}}_{L^1(B(x,n))} \leq C n^{D}\Vert  \,|\nabla f|\,\Vert_{L^1(B(x,2n))}, \,\,\forall x\in V,
\]
and 
\[
\norm{f-f_{B(x,n)}}_{L^2(B(x,n))} \leq C n^{\frac{D+1}{2}}\Vert  \,|\nabla f|\,\Vert_{L^2(B(x,2n))}, \,\,\forall x\in V.
\]
We will extend ideas from \cite{CSC93} (see also \cite{DS}) to prove our first  result.

\begin{proof}[Proof of Theorem \ref{thm:pD}]
For any $y,z \in B(x,n)$, we choose $\gamma_{y,z}$ as one of the shortest paths from $y$ to $z$. Define
$\Gamma_{x,n} :=\{\gamma_{y,z}: y,z \in B(x,n)\}$. Let $e$ be an oriented edge in a path from $e_-$ to $e_+$ and $\mu_e=
\mu_{e_- e_+}$.

Since
\[
|f(y)-f(z)| \leq \sum_{e \in \gamma_{y,z}}|f(e_+)-f(e_-)|,
\]
for $p=1$, we have
\begin{align*}
 \mu(B(x,n))\sum_{y \in B(x,n)}|f(y)-f_{B(x,n)}| \mu(y)
 & \leq  
\sum_{y,z \in B(x,n)}|f(y)-f(z)|\mu(y)\mu(z)
\\ & \leq  
\sum_{y,z \in B(x,n)}\sum_{e \in \gamma_{y,z}}|f(e_+)-f(e_-)|\,\frac{\mu_e}{ \mu_e}\mu(y)\mu(z)
\\ & \leq  
C \sum_{y,z \in B(x,n)} \left(\sum_{e \in \gamma_{y,z}}|f(e_+)-f(e_-)| \mu_e\right) \mu(y)\mu(z).
\end{align*}
In the last inequality, $C$ depends on the weight $\mu$.

For $p>1$, the H\"older inequality leads to
\begin{align*}
 \mu(B(x,n)) \sum_{y \in B(x,n)} |f(y)-f_{B(x,n)}|^p \mu(y)
 & \leq 
 \mu(B(x,n))^{1-p} \sum_{y \in B(x,n)} \left |\sum_{z \in B(x,n)} [f(y)-f(z)]\mu(z)^{\frac{1}{p}+\frac{1}{p'}}\right |^p \mu(y)
\\ & \leq  
 \mu(B(x,n))^{1-p} \sum_{y \in B(x,n)} \left(\sum_{z \in B(x,n)} |f(y)-f(z)|^p \mu(z)\right)
\cdot \left(\sum_{z \in B(x,n)} \mu(z)\right)^{p-1}\mu(y)
\\ & =  
\sum_{y,z \in B(x,n)}|f(y)-f(z)|^p \mu(y)\mu(z)
\\ & \leq  
\sum_{y,z \in B(x,n)}\left(\sum_{e \in \gamma_{y,z}}|f(e_+)-f(e_-)|\,\frac{\mu_e^{1/p}}{\mu_e^{1/p}}\right)^p \mu(y)\mu(z)
\\ & \leq  
\sum_{y,z \in B(x,n)} \left(\sum_{e \in \gamma_{y,z}}|f(e_+)-f(e_-)|^p \mu_e\right)
\cdot \left(\sum_{e \in \gamma_{y,z}} \mu_e^{-1/(p-1)}\right)^{p-1} \mu(y)\mu(z).
\end{align*}

Since $\Gamma$ has controlled weights, the last term is bounded from above by 
\[
C \sum_{e \in B(x,2n)}|f(e_+)-f(e_-)|^p \mu_e \cdot \sum_{y,z \in B(x,n)} |\gamma_{y,z}|^{p-1} \mu(y)\mu(z).
\]
Take $K(x,n)=\sum\limits_{y,z \in B(x,n)} |\gamma_{y,z}|^{p-1} \mu(y)\mu(z)$ for $p \geq 1$, then we get from (\ref{grad})
\[
\Vert f-f_{B(x,n)}\Vert_{L^p (B(x,n))}^p \leq C K(x,n) \Vert\,|\nabla f|\,\Vert_{L^p (B(x,2n))}^p,\,
\forall p \geq 1.
\]
One can see that $K(x,n) \leq (2n)^{p-1} V(x,n)^2$. Therefore,
\[
\Vert f-f_{B(x,n)}\Vert_{L^p (B(x,n))}^p \leq C  \mu(B(x,n))^{1+\frac{p-1}{D}} \Vert \,|\nabla f|\, \Vert_{L^p (B(x,2n))}^p.
\]
Since $ \mu(B(x,n)) \simeq n^D$, we have
\[
\Vert f-f_{B(x,n)}\Vert_{L^p (B(x,n))}^p \leq C n^{D+p-1} \Vert \,|\nabla f|\,\Vert_{L^p (B(x,2n))}^p.
\]
This leads to \eqref{pp} and hence we finish the proof.
\end{proof}

The pseudo-Poincar\'e type inequality follows now as a corollary.
\begin{coro}
Let $(\Gamma,\mu)$ be a locally uniformly finite weighted graph with controlled weight. Assume that $\mu(B(x,n))\simeq n^D, \forall n\in \mathbb N$. Then for $p \geq 1$, it holds  that 
\begin{equation}\label{vicsek-pp}
\Vert f-f_n\Vert_{p} \leq C n^{\frac{D}{p}+\frac{1}{p'}} \Vert \,|\nabla f|\, \Vert_{p }. \tag{$PP_p^{D}$}
\end{equation}
\end{coro}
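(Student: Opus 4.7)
The plan is to run globally the same shortest-path argument that appears in the proof of Theorem~\ref{thm:pD}. A naive summation of \eqref{pp} over centers $x$ would lose a factor $n^{D/p}$ in the exponent, so one has to use the Jensen averaging factor $\mu(B(x,n))^{-1}$ explicitly from the start. Writing
\[
f(x)-f_n(x)=\frac{1}{\mu(B(x,n))}\sum_{y\in B(x,n)}(f(x)-f(y))\,\mu(y)
\]
and applying Jensen's inequality (valid for $p\ge 1$) gives
\[
|f(x)-f_n(x)|^p \;\le\; \frac{1}{\mu(B(x,n))}\sum_{y\in B(x,n)}|f(x)-f(y)|^p\,\mu(y).
\]
Multiplying by $\mu(x)$, summing over $x\in V$, exchanging the two sums, and invoking the lower bound $\mu(B(x,n))\gtrsim n^D$ produces
\[
\|f-f_n\|_p^p \;\le\; C\,n^{-D}\sum_{\substack{x,y\in V\\ d(x,y)\le n}}|f(x)-f(y)|^p\,\mu(x)\mu(y).
\]

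For the remaining double sum I would repeat the path step from the proof of Theorem~\ref{thm:pD}. For each pair with $d(x,y)\le n$, fix a shortest path $\gamma_{x,y}$ of length at most $n$ and apply the discrete H\"older inequality:
\[
|f(x)-f(y)|^p \;\le\; d(x,y)^{p-1}\sum_{e\in\gamma_{x,y}}|f(e_+)-f(e_-)|^p \;\le\; n^{p-1}\sum_{e\in\gamma_{x,y}}|f(e_+)-f(e_-)|^p.
\]
Substituting and swapping the order of summation between pairs and edges then gives
\[
\|f-f_n\|_p^p \;\le\; C\,n^{p-1-D}\sum_e |f(e_+)-f(e_-)|^p\,N(e),\qquad N(e):=\sum_{\substack{(x,y):\,d(x,y)\le n\\ e\in\gamma_{x,y}}}\mu(x)\mu(y).
\]

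The only geometric input that needs care is the bound $N(e)\lesssim n^{2D}$. If $e$ belongs to a shortest path of length at most $n$ joining $x$ and $y$, then both endpoints satisfy $x,y\in B(e_-,n)$, so $N(e)\le \mu(B(e_-,n))^2\lesssim n^{2D}$ by the polynomial volume growth assumption. Plugging this in and invoking \eqref{grad} together with the uniform lower bound on the edge weights $\mu_e$ (coming from the controlled weight hypothesis) yields
\[
\|f-f_n\|_p^p \;\le\; C\,n^{D+p-1}\,\|\,|\nabla f|\,\|_p^p,
\]
whose $p$-th root is precisely the advertised exponent $\tfrac{D}{p}+\tfrac{1}{p'}$. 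No serious obstacle is expected: the whole argument is the one of Theorem~\ref{thm:pD} made global, with the volume lower bound absorbing the Jensen factor $\mu(B(x,n))^{-1}$.
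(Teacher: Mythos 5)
Your argument is correct, but it is not the route the paper takes: the paper proves the corollary by citing the standard deduction of the pseudo-Poincar\'e inequality from the local Poincar\'e inequality \eqref{pp} plus volume doubling (a covering argument with a maximal $n$-separated net, bounded overlap of the dilated balls, and a comparison of the two averages $f_n(x)$ and $f_{B(x_i,2n)}$, as in \cite[Lemma 5.3.2]{SC02}). You instead rerun the shortest-path machinery of Theorem \ref{thm:pD} globally: Jensen's inequality applied to the average over $B(x,n)$ (which works cleanly precisely because $f_n(x)$ is the average over the ball centered at $x$ itself, so no comparison of averages over different balls is needed), followed by the H\"older/path decomposition and the edge-counting bound $N(e)\le\mu(B(e_-,n))^2\lesssim n^{2D}$. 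The exponent bookkeeping checks out: $n^{p-1-D}\cdot n^{2D}=n^{D+p-1}$, whose $p$-th root is $n^{D/p+1/p'}$, and the passage from $\sum_e|f(e_+)-f(e_-)|^p$ to $\Vert\,|\nabla f|\,\Vert_p^p$ is legitimate via \eqref{grad} and the lower bound on $\mu_{xy}$ guaranteed by the controlled-weights assumption. What your approach buys is self-containedness --- it avoids the covering lemma and the bounded-overlap count entirely, and in fact subsumes the proof of Theorem \ref{thm:pD} rather than relying on it; what the paper's route buys is generality, since the covering argument shows that \emph{any} local inequality of type \eqref{pp} upgrades to its pseudo-Poincar\'e version under doubling, independently of how \eqref{pp} was proved.
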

The proof is the similar to  the one that Poincar\'e inequality together with the volume doubling  property implies the pseudo-Poincar\'e inequality, see for instance \cite[Lemma 5.3.2]{SC02}. We leave details to the interested reader. 

\section{Sobolev type inequalities }
In this section, we will prove Theorem \ref{thm:Sobolev}, for which we provide two different approaches.
One is to use the Poincar\'e type inequality \eqref{vicsek-pp}, following  \cite[Section III]{C95} (see also \cite[Proposition 2.6]{C03}).

\begin{proof}[Proof of Theorem \ref{thm:Sobolev}]
We first treat the case $p>1$. Note that for any $f\ge 0$ in $\mathcal C_0 
(\Omega)$, there holds for any $x\in V$ and $n\ge 1$
\begin{equation}\label{ineq}
|f_n(x)| =\left| \frac{1}{ \mu(B(x,n))} \sum_{y\in B(x,n)} f(y)\mu(y)\right| \leq C_D n^{-D} \Vert f\Vert_{1}.
\end{equation}
 Write
\[
\Vert f\Vert_{p}^p= (f-f_n, f^{p-1})+(f_n, f^{p-1}).
\]
It follows from H\"older inequalities, (\ref{vicsek-pp}), and (\ref{ineq}) that 
\begin{eqnarray*}
\Vert f\Vert_{p}^p
& \leq & 
\Vert f-f_n\Vert_{p}\cdot \Vert f\Vert_{p}^{p-1}+\Vert f_n\Vert_{\infty} \cdot \Vert f^{p-1}\Vert_{1}
\\ & \leq &
C n^{\frac{D}{p}+\frac{1}{p'}} \Vert\,|\nabla f|\,\Vert_{p}\cdot \Vert f\Vert_{p}^{p-1}+
\frac{C_D \mu(\Omega)}{n^{D}}\Vert f \Vert_{p}^p.
\end{eqnarray*}
Choosing $n$ to be the integer such that $n^D \simeq 2 C_D\mu(\Omega)$, then 
\[
\Vert f\Vert_{p} \leq C\mu(\Omega)^{\frac{1}{p}+\frac{1}{p'D}}\Vert \,|\nabla f|\, \Vert_{p}.
\]
The inequality also holds for any $f \in \mathcal C_0 (\Omega)$.

Next let $p=1$. For any $\lambda>0$, write 
\[
\mu(\{|f|\ge\lambda\}) \le \mu(\{|f-f_n|>\lambda/2\})+\mu(\{|f_n|>\lambda/2\}).
\]
Taking $n\simeq \left(2\|f\|_1/\lambda\right)^{1/D}$ to be the integer such that $\{|f_n|>\lambda/2\}=\emptyset$. Therefore by $(PP_1^D)$,
\[
\mu(\{|f|\ge\lambda\}) \le \frac{2}{\lambda} \|f-f_n\|_1 \le \frac{2}{\lambda} n^D \|\,|\nabla f|\,\|_1\le C\|f\|_1\|\,|\nabla f|\,\|_1.
\]
Taking $\lambda=1$ and $f=\mathbf 1_{\Omega}$ yields $\mu(\partial \Omega)\ge C$, which is equivalent to $(S_1^D)$. 
\end{proof}

The second approach, without using the Poincar\'e inequality, is  inspired by the Faber-Krahn inequality in \cite{BCG}. That is,  for a graph $(\Gamma,\mu)$ with controlled weights, the Faber-Krahn inequality as follows holds for any non-empty finite set $\Omega\subset \Gamma$,
\[
\lambda_1(\Omega):=\inf_{f\in \mathcal C_0(\Omega)}\frac{\sum\limits_{x\in \Omega } |\nabla f(x)|^2 \mu (x) }{\sum\limits_{x\in \Omega } |f(x)|^2 \mu (x)} \geq \frac{c}{r(\Omega)\mu(\Omega)},
\]
where
\[
r(\Omega) = \max \{ r \in \mathbb N: \exists x \in \Omega \text{ such that } B(x,r) \subset \Omega \}.
\]
If $\Gamma$ has polynomial volume upper bound $\mu(B(x,r))\gtrsim r^D$, the Faber-Krahn inequality is also the $L^2$-Sobolev type inequality in our theorme. Moreover, this inequality is sharp at all scales of the volume on Vicsek graphs (\cite[Theorem 4.1]{BCG}).

\begin{proof}[An alternative proof of Theorem \ref{thm:Sobolev}] 
For any function $f \in \mathcal C_0(\Omega)$, assume $\max\limits_{x \in \Omega} |f(x)| = 1$ (otherwise we can normalize $f$). Then we have
\begin{equation}\label{f}
\sum_{x\in \Omega } |f(x)|^p \mu (x) \leq \mu(\Omega).
\end{equation}

Consider a point $x_0$ such that $| f(x_0)| = 1$ and the largest integer $n$ such that the ball $B(x_0, n) \subset \Omega$. Obviously we have $n \leq r(\Omega)$. Also, there exists a sequence of points
\[
x_0 \sim x_1 \sim x_2 \sim \cdots \sim x_n \sim x_{n+1}
\]
starting from $x_0$ and terminating at a point $x_{n+1} \notin \Omega$.

Therefore, by \eqref{grad} and the H\"older inequality
\begin{equation}\label{df}
\begin{split}
\sum_{x\in \Omega } |\nabla f(x)|^p \mu (x) 
&\simeq 
\sum_{x,y \in \Omega } |f(x)-f(y)|^p \mu_{xy}
\\ &\geq \frac{c}{n^{p-1}}\left( \sum_{i=0}^{n}|f(x_i)-f(x_{i+1})|\right)^p
\geq \frac{c}{n^{p-1}},
\end{split}
\end{equation}
where the last inequality is due to the fact that
\[
\sum_{i=0}^{n}|f(x_i)-f(x_{i+1})| \geq |f(x_0)-f(x_{n+1})|=1.
\]
It follows from (\ref{f}) and (\ref{df}) that
\[
\frac{\sum\limits_{x\in \Omega } |\nabla f(x)|^p \mu (x) }{\sum\limits_{x\in \Omega } |f(x)|^p \mu (x)}
\geq \frac{c}{n^{p-1} \mu(\Omega)}
\geq \frac{c}{\mu(\Omega)^{1+(p-1)/D}}.
\]
Eventually, we obtain \eqref{Sobolev} and the proof is completed.

\end{proof}

\bigskip

\section{Optimality on Vicsek graphs}
Considering the Vicsek graphs, we assume that the weight $\mu$ is the standard weight. Recall the construction of a Vicsek graph $\Gamma$ on $\mathbb R^d$ (taken from \cite{CG03}):
Let $Q_r$ denote the cube in $\mathbb R^n$
\[
Q_r=\{x \in \mathbb R^d: 0\leq x_i\leq r,\,\, i=1,2,\cdots,d\}.
\]
Construct an increasing sequence $\{\Gamma_k\}$ of finite graphs as subsets of $Q_{3^k}$. Let $\Gamma_1$ be the set of 
$2^d +1$ points containing all vertices of $Q_1$ and the center of $Q_1$. Define $2^d$ edges in $\Gamma_1$ as segments 
connecting the center with the corners. Assuming that $\Gamma_k$ is already constructed, define $\Gamma_{k+1}$ as follows. 
The cube $Q_{3^k+1}$ is naturally divided into $3^n$ congruent copies of $Q_{3^k}$; select $2^d +1$ of the copies of 
$Q_{3^k}$ by taking the corner cubes and the center one. In each of the selected copies of $Q_{3^k}$ construct a congruent 
copy of graph $\Gamma_k$, and define $\Gamma_{k+1}$ as the union of all $2^d+1$ copies of $\Gamma_k$ (merged at the 
corners). Then the Vicsek tree $\Gamma$ is the union of all $\Gamma_k$, $k\geq 1$ (see Figure \ref{fig1}). 
\begin{figure}
\begin{center}
\includegraphics[scale=0.8]{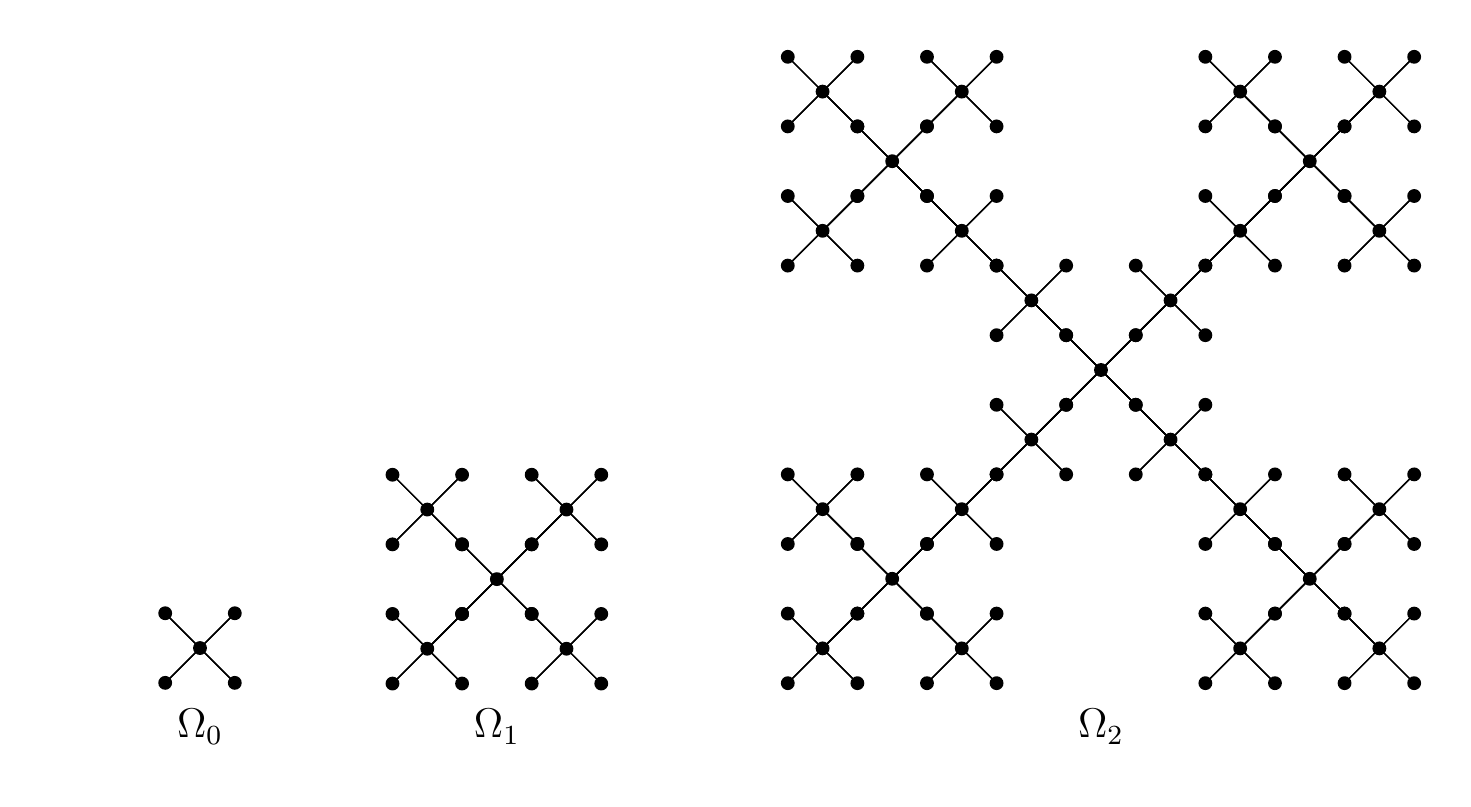}
\end{center}
\caption{The first steps of the Vicsek graph with parameter $D=\log_3 5$  built in $\R^2$.}\label{fig1}
\end{figure}
Consider the natural weight $\mu_{xy}=1$ if $x\sim y$ and otherwise $0$.
For all $x\in \Gamma$, $r\geq 1$ and $k>0$, $\Gamma$ satisfies
\[
\mu(B(x,r))\simeq r^D
\]
and 
\[
p_k(x,x) \simeq C k^{-\frac{D}{D+1}}.
\]
where $D=\log_3 (2^d+1)$. By Theorem \ref{thm:pD}, the Vicsek graph satisfies the following scale-invariant Poincar\'e inequality: for any $f\in \mathcal C_0(\Gamma)$,
\begin{equation}\label{P_2'}
\norm{f-f_{B(x,r)}}_{L^2(B(x,n))}  \leq C r^{\frac{D+1}{2}} \norm{ \,|\nabla f|\,}_{L^2(B(x,2r))}, \,\,\forall x\in V,
\end{equation} 
Since $D>1$, this inequality is strictly weaker than ($P_2$).

\begin{proof}[Proof of Proposition \ref{opt}] It suffices to show the optimality of the generalized Sobolev inequality \eqref{Sobolev} for $p\geq 1$. Indeed,  note that we can obtain \eqref{Sobolev} from \eqref{pp} in the proof of Theorem 
1.2. If the exponent $D(p)=\frac{D}{p}+\frac{1}{p'}$ of $r$ in \eqref{pp} is not optimal, then \eqref{Sobolev} can also be improved, which contradicts its optimality.

In fact, with the same functions and subsets, we can also show the optimality of \eqref{Sobolev} for $p\geq 1$.  

Let $\Omega_n=\Gamma \bigcap [0,3^n]^d$ be the same subset as in 
\cite{BCG}, where $q=2^d+1=3^D$. Hence $\mu(\Omega_n) \simeq q^n$. Denote by $z_0$ the centre of $\Omega_n$ and by 
$z_i , i \geq 1$ its corners. Define $F_n$ as follows: $F_n(z_0) = 1, F_n(z_i) = 0, i \geq 1$, and extend $F_n$ as a harmonic function in the 
rest of $\Omega_n$. Then $F_n$ is linear on each of the paths of length $3^n$, which connects $z_0$ with the corners $z_i$, and is constant elsewhere. More exactly, if $z$ belongs to some $\gamma_{z_0,z_i}$, then $F_n(z)=3^{-n}d(z_i,z)$. If not, then $F_n(z)=F_n(z')$, where $z'$ is the nearest vertex in certain line of  $z_0$ and $z_i$. See Figure \ref{fig2}, which we take from \cite{BCG}.
\begin{figure}
\begin{center}
\includegraphics[scale=0.8]{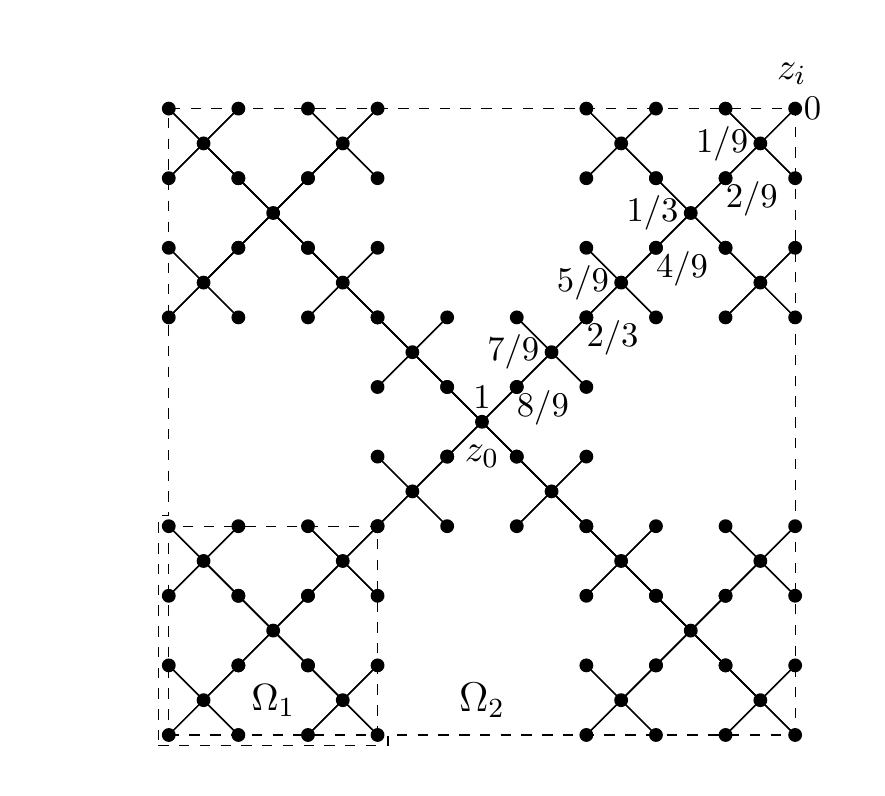}
\end{center}
\caption{the function $g_2$ on the diagonal $z_0 z_i$} \label{fig2}
\end{figure}

For any $x$ in the $n-1$  block with centre $z_0$, we have  $F_n(x) \geq \frac{2}{3}$. Therefore
\[
\sum_{x\in \Omega_n} |F_n(x)|^p \mu(x) \geq (2/3)^p \mu(\Omega_{n-1}) \simeq \mu(\Omega_{n}).
\]

Also, since $|F_n(x)-F_n(y)| = 3^{-n}$ for any two neighbours $x, y$ on each of the diagonals connecting $z_0$ and $z_i$, and otherwise 
$F_n(x)-F_n(y)= 0$, we obtain
\begin{eqnarray*}
\Vert \,|\nabla F_n|\,\Vert_{p}^p &\simeq& \sum_{x,y \in \Omega_n}|F_n(x)-F_n(y)|^p \mu_{xy}
\leq \sum_{i=1}^{2^N} 3^{-np} d(z_0,z_i)=2^N 3^{-n(p-1)}
\\ &\simeq& \mu(\Omega_n)^{-\frac{p-1}{D}}.
\end{eqnarray*}

Finally combining the above two estimates, we have
\[
\frac{\Vert\,|\nabla F_n|\,\Vert_{p}}{\Vert F_n\Vert_{p}} \lesssim  \mu(\Omega_n)^{-\frac{1}{p'D}-\frac{1}{p}}.
\]
This finishes the proof.
\end{proof}

\begin{rem}
In \cite{CCFR}, it was proved that $\left\|(I-P)^{1/2}f\right\|_p \le C\|\,|\nabla f|\,\|_p$ doesn't hold for $1<p<2$ and hence by duality the Riesz transform $\nabla(I-P)^{-1/2}$ is not bounded on $L^p$ for $p>2$. This result is strikingly different from Euclidean spaces and its proof relies on an argument of contradiction for which we used the same family of functions as in the above proof. It would be of further interest to explore the intrinsic role that the weaker Poincar\'e inequalities play on the boundedness of the Riesz transform. 
\end{rem}

\bibliographystyle{plain}

\begin{thebibliography}{10}

\bibitem{BR09}
N.~Badr and E.~Russ.
\newblock Interpolation of {S}obolev spaces, {L}ittlewood-{P}aley inequalities
  and {R}iesz transforms on graphs.
\newblock {\em Publ. Mat.}, 53(2):273--328, 2009.

\bibitem{BCG}
M.~Barlow, T.~Coulhon, and A.~Grigor'yan.
\newblock Manifolds and graphs with slow heat kernel decay.
\newblock {\em Invent. Math.}, 144(3):609--649, 2001.

\bibitem{B04}
M.~T. Barlow.
\newblock Which values of the volume growth and escape time exponent are
  possible for a graph?
\newblock {\em Rev. Mat. Iberoamericana}, 20(1):1--31, 2004.

\bibitem{B17}
M.~T. Barlow.
\newblock {\em Random walks and heat kernels on graphs}, volume 438 of {\em
  London Mathematical Society Lecture Note Series}.
\newblock Cambridge University Press, Cambridge, 2017.

\bibitem{CCFR}
Li~Chen, Thierry Coulhon, Joseph Feneuil, and Emmanuel Russ.
\newblock Riesz transform for {$1\le p\le 2$} without {G}aussian heat kernel
  bound.
\newblock {\em J. Geom. Anal.}, 27(2):1489--1514, 2017.

\bibitem{C95}
T.~Coulhon.
\newblock Dimensions at infinity for {R}iemannian manifolds.
\newblock {\em Potential Anal.}, 4(4):335--344, 1995.
\newblock Potential theory and degenerate partial differential operators
  (Parma).

\bibitem{C96}
T.~Coulhon.
\newblock Espaces de {L}ipschitz et in\'{e}galit\'{e}s de {P}oincar\'{e}.
\newblock {\em J. Funct. Anal.}, 136(1):81--113, 1996.

\bibitem{C03}
T.~Coulhon.
\newblock Heat kernel and isoperimetry on non-compact {R}iemannian manifolds.
\newblock In {\em Heat kernels and analysis on manifolds, graphs, and metric
  spaces ({P}aris, 2002)}, volume 338 of {\em Contemp. Math.}, pages 65--99.
  Amer. Math. Soc., Providence, RI, 2003.

\bibitem{CG03}
T.~Coulhon and A.~Grigor'yan.
\newblock Pointwise estimates for transition probabilities of random walks on
  infinite graphs.
\newblock In {\em Fractals in {G}raz 2001}, Trends Math., pages 119--134.
  Birkh{\"a}user, Basel, 2003.

\bibitem{CHS}
T.~Coulhon, I.~Holopainen, and L.~Saloff-Coste.
\newblock Harnack inequality and hyperbolicity for subelliptic
  {$p$}-{L}aplacians with applications to {P}icard type theorems.
\newblock {\em Geom. Funct. Anal.}, 11(6):1139--1191, 2001.

\bibitem{CSC93}
T.~Coulhon and L.~Saloff-Coste.
\newblock Isop{\'e}rim{\'e}trie pour les groupes et les vari{\'e}t{\'e}s.
\newblock {\em Rev. Mat. Iberoamericana}, 9(2):293--314, 1993.

\bibitem{DS}
P.~Diaconis and D.~Stroock.
\newblock Geometric bounds for eigenvalues of {M}arkov chains.
\newblock {\em Ann. Appl. Probab.}, 1(1):36--61, 1991.

\bibitem{SC92}
L.~Saloff-Coste.
\newblock A note on {P}oincar{\'e}, {S}obolev, and {H}arnack inequalities.
\newblock {\em Internat. Math. Res. Notices}, (2):27--38, 1992.

\bibitem{SC02}
L.~Saloff-Coste.
\newblock {\em Aspects of {S}obolev-type inequalities}, volume 289 of {\em
  London Mathematical Society Lecture Note Series}.
\newblock Cambridge University Press, Cambridge, 2002.

\bibitem{SC09}
Laurent Saloff-Coste.
\newblock Sobolev inequalities in familiar and unfamiliar settings.
\newblock In {\em Sobolev spaces in mathematics. {I}}, volume~8 of {\em Int.
  Math. Ser. (N. Y.)}, pages 299--343. Springer, New York, 2009.

\end{thebibliography}

\end{document}